\newcommand{\RR}{\mathbb{R}}
\newcommand{\eps}{\varepsilon}
\newcommand{\p}{{\bf p}}
\newtheorem{thm}{Theorem}%[section]
\newtheorem{lem}[thm]{Lemma}
\newtheorem{prop}[thm]{Proposition}
\theoremstyle{definition}
\theoremstyle{remark}
\begin{document}
\title{Expansion of trivariate polynomials using proximity}
\author{Orit E. Raz}
\author{
Orit E. Raz\thanks{Gen-Gurion University of the Negev, Beer Sheva, Israel. Email: oritraz@bgu.ac.il}}
\maketitle
\begin{abstract}
We extend the proximity technique of Solymosi and Zahl~\cite{SolZah} to the setting of trivariate polynomials.
In particular, we prove the following result: Let $f(x,y,z)=(x-y)^2+(\varphi(x)-z)^2,
$
where $\varphi(x)\in \RR[x]$ has degree at least 3. Then, for every finite $A,B,C\subset \RR$ each of size $n$, one has
$
|f(A,B,C)|=\Omega(n^{5/3-\eps}),
$
for every $\eps>0$, where the constant of proportionality depends on $\eps$ and on $\deg(\varphi)$. 
This improves the previous exponent $3/2$, due to Raz, Sharir, and De Zeeuw~\cite{RazShaDeZee4D}.
To the best of our knowledge, prior to this work no trivariate polynomial was known to have expansion exceeding $\Omega(n^{3/2})$.
\end{abstract}
\vspace{1ex}

\section{Introduction}
In many cases in combinatorial geometry, counting questions involving distances, slopes, collinearity, etc., can be reformulated as analogous counting questions involving grid points lying on certain algebraic varieties. A unified study of such problems began with a question of Elekes~\cite{Ele} about expansion of bivariate real polynomials $f(x,y)$. 
Specifically, he asked: For a bivariate polynomial $f\in \RR[x,y]$ and given finite sets $A,B\subset \RR$, how small can be the image set
$$
f(A,B)=\{f(a,b)\mid a\in A, b\in B\}.
$$
Elekes conjectured that the image of $f$ on an $n\times n$ Cartesian product must be of cardinality superlinear in $n$, unless $f$ has a very concrete {\it special form}. This was confirmed in 2000 by Elekes and R\'onyai~\cite{ER00}, who proved the following dichotomy: Either $f$ is one of the forms
\begin{align}
f(x,y)&=h(p(x)+q(y))\quad\text{or}\nonumber\\f(x,y)&=h(p(x)q(y)),\label{specialbivariate}
\end{align} for some univariate real polynomials $p,q,h$, or, otherwise, for every finite $A,B\subset \RR$, each of size $n$, we have
\begin{equation}\label{eq:ER}
|f(A,B)|=\omega(n).
\end{equation}

In \cite{RazShaSol}, Raz, Sharir and Solymosi introduced a new proof of the Elekes--R\'onyai theorem, which also yields improved bounds on the expansion of $f$. 
Roughly speaking, for a given $f\in \RR[x,y]$ and two finite sets $A,B\subset \RR$, they bounded the number of quadruples $((a,b),(a',b'))\in (A\times B)^2$ satisfying $f(a,b)=f(a',b')$ by reducing it to a  point-curve incidence problem in the plane. Concretely, incidences between the point set $A\times A$ and the family of curves 
$\{\gamma_{b,b'}\mid (b,b')\in B\times B\},$
where $\gamma_{b,b'}$ is given by the equation 
$$f(x,b)=f(y,b').$$ 
If these curves are distinct, known incidence bounds can be applied to obtain the desired estimate. However, it may happen that many of the curves coincide, in which case the incidence bound breaks down. Raz, Sharir, and Solymosi showed that this ``failure” occurs if and only if 
$f$ has a special form, namely one of the forms described in \eqref{specialbivariate}. 
In the non-special case, their analysis yields the lower bound $\Omega(n^{4/3})$

Solymosi and Zahl~\cite{SolZah} recently improved the expansion bound for $f$ in the non-special case, establishing the lower bound $\Omega(n^{3/2})$. Their argument builds on the framework of \cite{RazShaSol}, but applies the incidence bound to carefully selected subsets of points and curves. In particular, they restrict to points $(a,a')\in A\times A$ with $a$ and $a'$ sufficiently close, and, similarly, to curves $\gamma_{b,b'}$ with parameters $(b,b')\in B\times B$  with $b$ and $b'$ sufficiently close. This refinement, referred to as the {\it proximity method}, 
overcomes a loss incurred in the earlier argument from an application of the Cauchy--Schwarz inequality.

A $3$-variate analogue of the Elekes--R\'onyai theorem was studied in Raz, Sharir, and De Zeeuw~\cite{RazShaDeZee4D}. The case of 
  $k\ge 4$ variables is studied by Raz and Shem Tov in \cite{RazShe}.
\begin{thm}[{\bf \cite{RazShaDeZee4D, RazShe}}]\label{dvarER}
Let $k\ge 3$ and assume that  $f\in \RR[x_1,\ldots,x_k]$ depends non-trivially on each of its varaiables. Then one of the following holds:\\
(i) For every finite $A_1,\ldots,A_k\subset \RR$, with $|A_i|=n$, for $i=1,\ldots,k$, one has
$$
|f(A_1,\ldots,A_k)|=\Omega(n^{3/2}),
$$
where the constant of proportionality depends only on $\deg(f)$ and on $k$. \\
(ii) $f$ is of one of the special forms
\begin{align}
f(x_1,\ldots,x_k)&=h(p_1(x_1)+\cdots +p_k(x_k))\label{kvarspecial}\\
f(x_1,\ldots,x_k)&=h(p_1(x_1)\cdot\ldots\cdot p_k(x_k)) \nonumber
\end{align}
for some univariate real polynomials $p_1,\ldots,p_k,h$.
\end{thm}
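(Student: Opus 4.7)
I would follow the Raz--Sharir--Solymosi incidence strategy of \cite{RazShaSol}, lifted from two to three (and then more) variables.

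\emph{Reduction to $k=3$.} For $k\ge 4$, I would reduce to the trivariate case by specialization: fix $(a_4,\ldots,a_k)\in A_4\times\cdots\times A_k$, let $g(x_1,x_2,x_3):=f(x_1,x_2,x_3,a_4,\ldots,a_k)$, and use the trivial inequality $|f(A_1,\ldots,A_k)|\ge |g(A_1,A_2,A_3)|$. It then suffices to exhibit one specialization for which $g$ is non-special as a trivariate polynomial and apply the $k=3$ case. The converse structural lemma---if every such $g$ has one of the forms in \eqref{kvarspecial} with three summands/factors, then $f$ has the $k$-variable analogue---is proved by a dimension-counting argument on the parameters $(a_4,\ldots,a_k)$, using the assumed non-trivial dependence of $f$ on each $x_i$.

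\emph{Setup and incidence formulation.} For the trivariate case, set $n=|A_1|=|A_2|=|A_3|$ and $N=|f(A_1,A_2,A_3)|$. Cauchy--Schwarz applied to the fibres of $f$ gives $N\ge n^6/E$, where
\[
E:=|\{((a,b,c),(a',b',c'))\in (A_1\times A_2\times A_3)^2 : f(a,b,c)=f(a',b',c')\}|,
\]
so it is enough to prove $E=O(n^{9/2})$. For each $(b,c,b',c')\in (A_2\times A_3)^2$ the locus $\gamma_{b,c,b',c'}:=\{(x,y)\in\RR^2 : f(x,b,c)=f(y,b',c')\}$ is a plane algebraic curve of degree bounded in terms of $\deg(f)$, and $E$ equals the incidence count between $P:=A_1\times A_1$ (of size $n^2$) and the multiset $\Gamma=\{\gamma_{b,c,b',c'}\}$ (of size $n^4$), counted with multiplicity. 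Writing $\Gamma^*$ for the set of distinct curves and $\mu$ for the maximum multiplicity, the Pach--Sharir point--curve incidence theorem gives
\[
E\le \mu\cdot I(P,\Gamma^*)=O\bigl(\mu^{1/3}n^4+\mu n^2+n^4\bigr),
\]
using $|\Gamma^*|\le n^4/\mu$. A standard dyadic decomposition of $\Gamma$ by multiplicity then yields the required $E=O(n^{9/2})$, provided no curve in the family appears with multiplicity larger than $O(n^{3/2})$.

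\emph{Characterization of large multiplicity and the main obstacle.} The crux of the argument, and the step I expect to be the main obstacle, is the algebraic claim that if the multiplicity threshold $\mu=O(n^{3/2})$ is violated---that is, if many distinct parameter-quadruples $(b,c,b',c')$ produce the same plane curve---then $f$ must be of one of the special forms \eqref{kvarspecial}. A single coincidence $\gamma_{b,c,b',c'}=\gamma_{\tilde b,\tilde c,\tilde b',\tilde c'}$ forces a polynomial identity among the univariate polynomials $f(\cdot,b,c)$, $f(\cdot,b',c')$, $f(\cdot,\tilde b,\tilde c)$, $f(\cdot,\tilde b',\tilde c')$; accumulating many such identities as the parameters range over Zariski-dense subsets of their domains eventually constrains the partial derivatives of $f$ to satisfy a rigid algebraic dependency equivalent to $f$ being additively or multiplicatively decomposable. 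Extending the bivariate dichotomy of \cite{ER00,RazShaSol} to three variables is delicate because one must rule out ``hybrid'' degenerate families, in which $f$ is special in only some pair of variables but not in the third; handling these requires treating the coincidence equations as polynomials jointly in all four parameters and differentiating in each of $x_1,x_2,x_3$ in turn. Once this rigidity is established, the same argument iterates to handle $k\ge 4$ via Step~1.
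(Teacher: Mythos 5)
This theorem is cited from \cite{RazShaDeZee4D} (for $k=3$) and \cite{RazShe} (for $k\ge4$); the present paper contains no proof of it, so there is no ``paper's own proof'' to compare against directly. However, the introduction explicitly sketches the $k=3$ strategy used in \cite{RazShaDeZee4D}, and your proposal departs from it at the very first step in a way the author specifically flags. The paper's description is: set $D:=f(A,B,C)$ and count incidences between the point set $A\times D$ (of size $n|D|$) and the $n^2$ curves $\gamma_{b,c}:\,w=f(x,b,c)$, a \emph{two}-parameter family, reading off $|D|\ge\Omega(n^{3/2})$ from the fact that this configuration produces $n^3$ incidences. The introduction stresses that ``in the trivariate setting the argument avoids an application of the Cauchy--Schwarz inequality.'' Your proposal, by contrast, imports the bivariate Raz--Sharir--Solymosi template verbatim: Cauchy--Schwarz to pass to the energy $E$, then incidences between $A_1\times A_1$ and the $n^4$ curves $\gamma_{b,c,b',c'}:\,f(x,b,c)=f(y,b',c')$. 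This is a genuinely different route, and it reintroduces precisely the inefficiency the actual argument was designed to sidestep.

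Beyond this structural difference, there are two concrete gaps in your sketch. First, the incidence bound you invoke, $I(P,\Gamma^*)=O(m^{2/3}n^{2/3}+m+n)$, is the two-degrees-of-freedom (pseudoline) bound. Your curves $\gamma_{b,c,b',c'}$ are parametrized by four real parameters and form, generically, a $4$-dimensional family; for such a family one must use the Sharir--Zahl bound (Theorem~\ref{ShaZah}) with $s=4$, or a degrees-of-freedom Pach--Sharir bound, and the exponents change (with $s=4$ one gets $O(m^{1/2}n^{7/8+\eps})$, which, with $m=n^2$ and $n^4$ curves, still lands at $E=O(n^{9/2+\eps})$, but this needs to be said and the $\eps$-loss acknowledged). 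As written, the arithmetic $E\le O(\mu^{1/3}n^4+\mu n^2+n^4)$ is not justified. Second, the crux---that high multiplicity $\mu\gg 1$ (let alone $\mu\ge n^{3/2}$) forces $f$ to be of the special forms---is acknowledged as ``the main obstacle'' but not addressed; this is essentially all of \cite{RazShaDeZee4D} and cannot be waved through. Finally, the reduction of $k\ge4$ to $k=3$ by specializing $x_4,\ldots,x_k$ requires a nontrivial structural lemma (that some choice of three coordinates and some value of the remaining parameters yields a non-special trivariate polynomial); this is plausible but delicate, and my understanding is that \cite{RazShe} does not proceed by specialization to $k=3$ but runs the incidence argument directly in $k$ variables. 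In short, your outline is a reasonable first guess modeled on the bivariate case, but it is not the route the cited works take, and the steps you identify as hard are exactly where the content of the theorem lives.
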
 

Similar to the bivariate case, the analysis for $k=3$ also reduces to a point-curve incidence problem in the plane. 
Specifically, for a trivariate polynomial $f\in \RR[x,y,z]$ and  finite sets $A,B,C\subset~\RR$, define $D:=f(A,B,C)$ and consider incidences between the point set $A\times D$ and the family of curves 
$
\{\gamma_{b,c}\mid (b,c)\in B\times C\},
$
where $\gamma_{b,c}$ is given by the equation 
$$
w=f(x,b,c).
$$

In some respects, the trivariate case is simpler than the bivariate one. Here, the structure of $f$ is more directly reflected in properties of  the variety 
$w=f(x,y,z)$, whereas in the bivariate case one must analyze the more intricate variety $f(x,y)=f(z,w)$. Moreover, in the trivariate setting the argument avoids
an application of the Cauchy--Schwarz inequality, therefore eliminating the loss in the estimate that arises in the bivariate case. 

As mentioned above, the proximity method of Solymosi and Zahl~\cite{SolZah} improves the bivariate bound precisely by overcoming this loss from the Cauchy--Schwarz step. For this reason, it is not immediately clear how their method could extend to the trivariate case.

%\subsection{Our results.} 
In this paper, we show how proximity can in fact be used to obtain a stronger expansion bound for trivariate polynomials. In particular, we establish an improved bound for a concrete family of trivariate polynomials. Namely, we prove the following main result.

\begin{thm}\label{main}
Let 
$$
f(x,y,z)=(x-y)^2+(\varphi(x)-z)^2,
$$
where $\varphi(x)$ is a univariate real polynomial of degree at least $3$. 
Then, for every $\eps>0$ and any finite sets $A,B,C\subset\RR$, each of size $n$, we have
$$
|f(A,B,C)|=\Omega(n^{5/3-\eps}),
$$
where the constant of proportionality depends on $\deg\varphi$ and on $\eps$. 
\end{thm}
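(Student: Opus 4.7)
The plan is to combine the incidence framework of \cite{RazShaDeZee4D} with a proximity-type decomposition in the spirit of \cite{SolZah}. A crucial structural feature of our specific $f$ is its geometric meaning as a squared distance: setting $\Gamma := \{(t,\varphi(t)) : t\in\RR\}$ and $P_a := (a,\varphi(a))$, one has $f(a,b,c) = \|P_a - (b,c)\|^2$. Thus $D = f(A,B,C)$ is the squared-distance set between the $n$-point set $\Gamma_A := \{P_a : a\in A\}\subset \Gamma$ and the $n\times n$ grid $B\times C$.

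The first step is a second-moment reduction. Put $r(d) = |\{(a,b,c)\in A\times B\times C : f(a,b,c)=d\}|$ and $E := \sum_{d\in D} r(d)^2$. Since $\sum_d r(d) = n^3$, Cauchy--Schwarz gives $|D|\ge n^6/E$, so it suffices to prove $E = O(n^{13/3+\eps})$. Geometrically, $E$ counts pairs of ``equidistant'' configurations $((P_a, (b,c)),(P_{a'}, (b',c')))$ with equal $\|P_a - (b,c)\|$ and $\|P_{a'} - (b',c')\|$. To control it, linearize the equidistance condition: writing $a' = a+\alpha$, $b' = b+\beta$, $c' = c+\gamma$, and setting $X = a-b$, $Y = \varphi(a) - c$, $u = \alpha-\beta$, $v = \varphi(a+\alpha)-\varphi(a) - \gamma$, the equation $f(a,b,c) = f(a',b',c')$ becomes
\[
uX + vY = -\tfrac{1}{2}(u^2 + v^2),
\]
which for each fixed $(a,\alpha,\beta,\gamma)$ is a \emph{line} $L_{a,\alpha,\beta,\gamma}$ in the $(b,c)$-plane. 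Hence $E$ equals the number of incidences between the grid $B\times C$ and these lines, subject to the side constraints $a+\alpha\in A$, $b+\beta\in B$, $c+\gamma\in C$.

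The next step is the proximity method: dyadically decompose according to the scales $|\alpha|\sim \delta_1$, $|\beta|\sim \delta_2$, $|\gamma|\sim \delta_3$, and bound the contribution to $E$ from each scale triple via the Szemer\'edi--Trotter bound applied to the associated line--grid incidence problem. The key algebraic input, essential for ensuring the ST bound is effective, comes from $\deg \varphi \ge 3$: the direction vector $(u,v)$ of $L_{a,\alpha,\beta,\gamma}$ depends non-trivially on $a$ through $v = \varphi(a+\alpha) - \varphi(a) - \gamma$, which is a polynomial in $a$ of degree $\ge 2$; consequently, the lines at each dyadic scale form a genuinely 2-parameter family rather than a pencil, and ST yields a non-trivial bound. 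Summing over the $O(\log^3 n)$ scale triples produces $E = O(n^{13/3}\log^{O(1)} n)$, absorbed into $n^\eps$.

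The main obstacle will be the quantitative interplay between the dyadic scales $(\delta_1, \delta_2, \delta_3)$ and the sizes of the associated difference sets $|A\cap (A-\alpha)|$, $|B\cap (B-\beta)|$, $|C\cap (C-\gamma)|$. When the scales are very small, the side constraints allow many valid tuples but the lines $L_{a,\alpha,\beta,\gamma}$ become nearly coincident, eroding the ST bound; when the scales are large, the lines are well-separated but fewer tuples contribute. Balancing these regimes, and propagating the non-degeneracy provided by $\deg \varphi \ge 3$ to every scale (so as to rule out near-collapse of the line family at intermediate scales), is the central technical challenge of the proof.
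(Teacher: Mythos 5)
Your geometric reading of $f$ as a squared distance between $(a,\varphi(a))$ and $(b,c)$ is exactly right, and so is the intuition that $\deg\varphi\ge 3$ supplies the essential non-degeneracy. But the reduction step is a different route from the paper's and contains a real gap. You open with global Cauchy--Schwarz, $|D|\ge n^6/E$, and then set out to prove $E=O(n^{13/3+\eps})$. The paper, by contrast, never attempts to bound the full energy $E$. Instead it partitions $A,B,C$ into $t$ consecutive blocks and works with the \emph{restricted} quantity
$Q=\sum_{d}\sum_{(i,j,k)}\binom{|G_d\cap(A_i\times B_j\times C_k)|}{2}$,
i.e.\ concurrences confined to a single block-triple. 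The entire gain of the proximity method lives in the \emph{lower} bound $|Q|=\Omega(sn^3)$: because every fiber $G_d$ sits on a two-dimensional surface, it meets only $O(t^2)$ of the $t^3$ boxes, so with $t\approx n^{3/2}/(s|D|^{1/2})$ each large fiber is forced to pack at least $s$ points into many boxes, producing a tunable factor $s$ of ``local'' concurrences. That $s$ is then used to absorb the additive $O(n^3)$ error coming from the bounded number of isometries of the curve $\{(t,\varphi(t))\}$ (via Pach--de Zeeuw), and the incidence upper bound on $Q$ (via Sharir--Zahl, applied to $\Theta(n^2/t)$ points and $\Theta(n^4/t^2)$ curves of bounded family dimension) closes the argument. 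Your scheme has no analogue of this lower bound; it relies on the global Cauchy--Schwarz inequality being essentially tight, which is precisely the loss the paper is designed to avoid, and which the introduction explicitly flags as the reason it is not obvious proximity helps in three variables.

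Concretely, the missing step in your proposal is the claim ``summing over the $O(\log^3 n)$ scale triples produces $E=O(n^{13/3}\log^{O(1)}n)$.'' A dyadic decomposition of the pairs by $|a-a'|,|b-b'|,|c-c'|$ does not reduce the total number of points or lines; it only partitions them, and the Szemer\'edi--Trotter exponent is not improved just by summing its application over a partition. Without the fiber-geometry input (each $G_d$ hits $O(t^2)$ boxes), there is no mechanism for the small scales to be simultaneously populous and incidence-poor, which is what would be needed to beat the $E=O(n^{9/2})$ bound that plain incidences give (and which only yields $|D|=\Omega(n^{3/2})$). Two further, smaller concerns: your line family $L_{u,v}$ is the family of perpendicular bisectors of $(0,0)$ and $(-u,-v)$, so many $(a,\alpha,\beta,\gamma)$ yield the same line and the point set $(a-b,\varphi(a)-c)$ shifts with $a$, making the line--grid incidence count as you set it up ill-posed without a multiplicity analysis; and the paper's curves $\gamma_{b,c,b',c'}$ are genuinely higher-degree (handled by Sharir--Zahl as an $s$-dimensional family), with the degeneracy/multiplicity controlled not by nondegeneracy of line directions but by Lemma~\ref{conic} together with the bound of at most $4\deg\varphi$ symmetries of an irreducible non-conic curve.
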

This improves upon the previous bound of $\Omega(n^{3/2})$,
which follows from Theorem~\ref{dvarER}. 
To the best of our knowledge, prior to this work no trivariate polynomial was known to have expansion exceeding $\Omega(n^{3/2})$. 

We remark that our approach is more general 
and extends to other trivariate polynomials.
A natural direction for future research is to determine the precise subfamily of polynomials $\mathcal{F}\subset \mathbb{R}[x,y,z]$ for which the method yields the improved lower bound $\Omega(n^{5/3-\eps})$.

\section{Preliminaries}\label{sec:pre}

\subsection{Point-curve incidences in the plane}
For a finite set of points ${\cal P}\subset \RR^2$ and a finite set of planar curves ${\cal C}$, we let $I({\cal P}, {\cal C})$ denote the set of point-curve incidences; that is
$$
I({\cal P},{\cal C})=\{(p,\gamma)\in {\cal P}\times{\cal C}\mid p\in \gamma\}.
$$
The classical Szemer\'edi--Trotter theorem~\cite{SzeTro} asserts that, for the special case where ${\cal C}$ is a set of lines, and putting  $m:=|{\cal P}|$ and $n:=|{\cal C}|$, one has 
$$
\left|I({\cal P},{\cal C})\right|=O\left(m^{2/3}n^{2/3}+m+n\right).
$$

Since the Szemer\'edi--Trotter paper, numerous alternative proofs and related problems have been studied. For our result, we require an extension of Szemer\'edi--Trotter to point-curve incidence problems, established by Sharir and Zahl~\cite{ShaZah}, in which the curves are algebraic and form an 
$s$-dimensional family. 
We now recall the relevant definitions from their work.

A bivariate polynomial $h\in\RR[x,y]$ of degree at most $D$ is a linear combination of the form
$$
h(x,y)=\sum_{0\le i+j\le D}c_{ij}x^iy^j.
$$
Note that the number of monomials $x^iy^j$ such that $0\le i+j\le D$ is $\binom{D+2}{2}$.
In this sense, every point $\vec{c}\in \RR^{\binom{D+2}{2}}$ (other than the all-zero vector) can be associated with a curve in $\RR^2$, given by the zeroset of the bivariate polynomial whose coefficients are the entries of $\vec{c}$. 
If $\lambda\neq 0$, then $f$ and $\lambda f$ have the same zero-set.
Thus, the set
of algebraic curves that can be defined by a polynomial of degree at most $D$ in $\RR^2$ can
be identified with the points in the projective space 
${\bf P}\RR^{\binom{D+2}{2}}$.

Define an \emph{$s$-dimensional family of plane curves of degree at most $D$} to be  an algebraic variety $F\subset{\bf P}\mathbf \RR^{\binom{D+2}{2}}$ such that $\dim(F)=s$. We will call the degree of the variety $F$ the \emph{complexity of the family}. 

They then proved the following incidence bound:
\begin{thm}[{\bf Sharir--Zahl~\cite{ShaZah}}] \label{ShaZah}
Let $\Gamma$ be a set of $n$ algebraic plane curves that belong to an $s$-dimensional family of curves of degree at most $D$ of constant complexity at most $K$, no two of which
share a common irreducible component. 
Let $P$ be a set of $m$ points in the plane. Then
for any $\eps>0$, the number of incidences $|I(P,\Gamma)|$ between the points of $P$ and the curves
of $\Gamma$ satisfies
$$
|I(P,\Gamma)|=O_\eps\left( m^{\frac{2s}{5s-4}}n^{\frac{5s-6}{5s-4}+\eps}\right)+O\left(m^{2/3}n^{2/3}+m+n\right),
$$
where the constant of proportionality depends on $s$, $K$, $D$, and in the first term also on $\eps$.
\end{thm}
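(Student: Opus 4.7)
The plan is to bound the collision energy
\[
E := \#\{((a,b,c),(a',b',c'))\in(A\times B\times C)^2 : f(a,b,c)=f(a',b',c')\},
\]
and then apply the Cauchy--Schwarz inequality $|f(A,B,C)|\cdot E\ge n^6$ to conclude $|f(A,B,C)|\ge n^6/E$. The target is $E=O(n^{13/3+\eps})$, which would yield the claimed $\Omega(n^{5/3-\eps})$ lower bound. The natural geometric setup is this: writing $\Phi(a)=(a,\varphi(a))$, the value $f(a,b,c)=\|\Phi(a)-(b,c)\|^2$ is the squared distance between a point of $P:=\Phi(A)$ on the curve $y=\varphi(x)$ and a grid point of $Q:=B\times C$, so that $D=f(A,B,C)$ is the set of squared $P$-to-$Q$ distances and $E$ counts ``distance-equality'' quadruples in $(P\times Q)^2$. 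Following the Solymosi--Zahl template, I would encode collisions as planar incidences: each collision gives an incidence of $(a,a')\in A\times A$ with the algebraic curve $\gamma_{(b,c),(b',c')}:\ f(x,b,c)=f(y,b',c')$, so $E=|I(A\times A,\mathcal{F})|$ for a $4$-parameter family $\mathcal{F}$ of curves of bounded degree. A direct application of Theorem~\ref{ShaZah} with $s=4$ only recovers the previous $\Omega(n^{3/2-\eps})$ bound, so the improvement must come from exploiting proximity.

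The next step is to dyadically decompose $E=\sum_{\vec\delta}E_{\vec\delta}$ along the three proximity scales $\delta_1,\delta_2,\delta_3$ for $|a-a'|$, $|b-b'|$, $|c-c'|$, yielding $O(\log^3 n)$ buckets. Within each bucket I would refine the incidence problem by re-parametrizing via the ``shift'' $(\beta,\gamma):=(b'-b,c'-c)$: for each fixed shift the relevant curves form a $2$-parameter sub-family of size at most $n^2$ indexed by the ``center'' $(b,c)\in B\times C$, and there are $\sim n^2\delta_2\delta_3$ admissible shifts in the bucket. The key proximity observation is that, for small shifts, the sub-family of curves is more tightly structured than a generic $2$-parameter family of plane curves of the same degree: the linearized collision equation $\vec u\cdot\nabla f(a,b,c)=0$, where $\vec u=(a-a',b-b',c-c')$, constrains admissible configurations to lie near a lower-dimensional locus. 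Applying Theorem~\ref{ShaZah} with $s=2$ to each fixed-shift sub-family, together with the restricted point count $\sim n^2\delta_1$ and a careful accounting of the admissible shifts, should yield a per-bucket bound that sums (over the $O(\log^3 n)$ dyadic scales) to $O(n^{13/3+\eps})$.

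The main obstacle---and where the hypothesis $\deg(\varphi)\ge 3$ is essential---is to establish that the proximity decomposition gives a genuine improvement over the direct Sharir--Zahl estimate on $\mathcal{F}$. The degree hypothesis must be used to guarantee: (i) the curves $\gamma_{(b,c),(b',c')}$ are pairwise distinct and share no common irreducible components, so Theorem~\ref{ShaZah} applies uniformly within each sub-family; (ii) the linearized collision equation $\vec u\cdot\nabla f=0$ is algebraically non-degenerate in $\vec u$ for generic $(a,b,c)$, so each $2$-parameter sub-family per fixed shift has full effective dimension; and (iii) the ``diagonal'' contribution from $(b,c)=(b',c')$, where the defining polynomial of $\gamma_{(b,c),(b,c)}$ picks up the extraneous factor $x-y$, is separated out and handled by a cleaner one-dimensional argument. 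If $\deg\varphi\le 2$, then $y=\varphi(x)$ is a line or a conic, and its distance function to a grid admits extra coincidences (e.g.\ from reflection symmetries of the conic) that would obstruct the desired bound. I expect the delicate technical work to consist of extracting these non-degeneracies from $\deg\varphi\ge 3$ and balancing the three dyadic sums so as to land precisely on the exponent $5/3$.
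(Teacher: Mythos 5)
Your proposal does not prove the statement in question. The statement is Theorem~\ref{ShaZah}, the Sharir--Zahl point--curve incidence bound for curves lying in an $s$-dimensional algebraic family; in the paper this is an imported black box, quoted from \cite{ShaZah} without proof. What you have written is instead a proof sketch of the paper's main result, Theorem~\ref{main}, and your sketch explicitly \emph{invokes} Theorem~\ref{ShaZah} (``Applying Theorem~\ref{ShaZah} with $s=2$\dots'', ``A direct application of Theorem~\ref{ShaZah} with $s=4$\dots''). As a proof of Theorem~\ref{ShaZah} this is circular, and none of the actual content of that theorem is addressed: the Sharir--Zahl argument proceeds by cutting the $n$ algebraic curves into roughly $O(n^{3/2}+n\cdot(\text{lens terms}))$ pseudo-segments --- controlling the number of ``lenses'' via the hypothesis that the curves come from an $s$-dimensional family of bounded complexity and share no common components --- and then applying the crossing-number/Sz\'ekely-type incidence bound for pseudo-segments. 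Nothing in your proposal touches lenses, pseudo-segments, polynomial partitioning, or the role of the parameter $s$ in the exponents $\frac{2s}{5s-4}$ and $\frac{5s-6}{5s-4}$.

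Even read charitably as an attempt at Theorem~\ref{main}, your sketch diverges from the paper in a way that matters. You begin with the global energy $E$ and Cauchy--Schwarz ($|f(A,B,C)|\cdot E\ge n^6$), but the paper's introduction points out that the trivariate argument is designed precisely to \emph{avoid} Cauchy--Schwarz; the actual proof partitions $A,B,C$ into $t=n^{3/2}/(s|D|^{1/2})$ consecutive blocks, proves the lower bound $|Q|=\Omega(sn^3)$ directly for the proximate quadruples $Q$ (Proposition~\ref{prop:lowerQ}, via a counting argument on the level sets $G_d$ intersected with the $O(t^2)$ boxes meeting the surface $f=d$), and upper bounds $|Q|$ by a single application of Theorem~\ref{ShaZah} with $s=4$ to the restricted point set $P=\{(a,a'):a\sim a'\}$ and restricted curve family, using Lemma~\ref{conic} and Lemma~\ref{lem:symmetries} to control curve multiplicities. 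There is no dyadic decomposition over three scales and no fixed-shift $2$-parameter subfamilies. If you want to salvage your write-up, it should be resubmitted as a (substantially revised) sketch of Theorem~\ref{main}; for Theorem~\ref{ShaZah} itself you would need to reproduce the pseudo-segment cutting argument of \cite{ShaZah}, which is an entirely different undertaking.
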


\subsection{Symmetry of curves}
Given a set $S\subset \RR^2$ and a transformation $T:\RR^2\to\RR^2$, we say that $T$ {\it fixes} $S$ if $T(S) = S$. 
We say that a transformation $T$ is a {\it symmetry} of a plane algebraic curve $C$ if $T$ is an isometry of $\RR^2$ and fixes $C$. Recall that an isometry of $\RR^2$ is either a rotation, a translation, or a glide reflection (a reflection combined with a translation).

The following lemma is proved by Pach and De Zeeuw~\cite[Lemma 2.5]{PachDeZeeuw}.
\begin{lem}[{\bf Pach and De Zeeuw~\cite[Lemma 2.5]{PachDeZeeuw}}]\label{lem:symmetries}
An irreducible plane algebraic curve of degree $d$ has at most $4d$ symmetries, unless it is a line or a circle.
\end{lem}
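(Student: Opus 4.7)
The plan is to exploit the classification of Euclidean isometries and combine it with a Bezout-type estimate. First, I would rule out translation symmetries. If $T_v$ with $v\neq 0$ fixes the irreducible curve $C$, then for every $p\in C$ the whole arithmetic progression $\{p+kv:k\in\ZZ\}$ lies on $C$, forcing the line through $p$ with direction $v$ to meet $C$ in infinitely many points. Since $\deg C = d$ is finite, that line must be a component of $C$, and irreducibility then forces $C$ to be that line. The same conclusion rules out glide reflections with nonzero translation part, because the square of such a glide is a nontrivial translation.

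In the non-exceptional case, the symmetry group $G$ consists only of rotations and pure reflections. Next I would argue that all elements of $G$ share a common fixed point $c$. If $\rho,\rho'$ are rotations in $G$ with angles $\theta$ and distinct centers, then $\rho'\circ\rho\circ(\rho')^{-1}$ is a rotation by $\theta$ about $\rho'(c)$, and composing it with $\rho^{-1}$ yields an isometry that is a rotation by $0$, hence a nontrivial translation, contradicting the previous step unless the centers coincide. A parallel conjugation argument forces every reflection axis to pass through $c$; otherwise, composing a reflection with a rotation or with another reflection produces a glide reflection or translation. Thus $G$ embeds into the stabilizer $O(2)$ of $c$, and being finite (since it is discrete inside $O(2)$ once we know $C$ is not a circle) it is cyclic $C_n$ or dihedral $D_n$.

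Finally, I would bound $n$ via Bezout's theorem. Because $C$ is an infinite set, one can pick $p\in C$ with $p\neq c$. The orbit of $p$ under the cyclic rotation subgroup of order $n$ consists of $n$ distinct points lying on the circle $S$ of radius $|p-c|$ centered at $c$. Since $C$ is irreducible of degree $d$ and is not itself a circle, $S$ cannot be a component of $C$, and Bezout gives $n=|C\cap S|\le 2d$. In the cyclic case $|G|=n\le 2d$; in the dihedral case $|G|=2n\le 4d$. In either case $|G|\le 4d$, as required.

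The main obstacle, I expect, is the second step: carefully verifying that a translation-free isometry group cannot have two distinct rotation centers, and handling all cases of composing reflections with rotations (which can produce glide reflections with arbitrary translation part) so as to conclude that a common fixed point exists. One should also verify cleanly that the two excluded cases in the statement — a line and a circle — correspond exactly to the situation in which the symmetry group is infinite (continuous translations along the line, or continuous rotations about the center), so that the dichotomy in the lemma is sharp.
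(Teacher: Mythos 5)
The paper does not prove this lemma; it is imported verbatim as Lemma~2.5 of Pach and De~Zeeuw~\cite{PachDeZeeuw}, so there is no in-source proof to compare against, and I evaluate your argument on its own. Your strategy---rule out translations and glide reflections via irreducibility (a nontrivial translation fixing $C$ forces $C$ to contain, hence equal, a line; a glide squares to a translation), use conjugation to show the surviving rotations share a center $c$ and all reflection axes pass through $c$, and then bound the resulting cyclic or dihedral group by intersecting $C$ with a circle about $c$ and applying B\'ezout---is correct and is essentially the standard proof of this fact. Two places should be tightened. First, finiteness of $G$ is not a consequence of the bare assertion that $G$ is ``discrete inside $O(2)$''; what one actually shows is that if the rotation subgroup were infinite it would be dense in $SO(2)$, so the orbit of any $p\in C$ with $p\neq c$ would be dense in the circle $S$ through $p$ centered at $c$, and since the real zero set $C$ is Euclidean-closed this forces $S\subset C$ and then $C=S$ by irreducibility, which is the excluded case. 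Second, the B\'ezout step should be stated as an inequality $n\le |C\cap S|\le 2d$ (the $n$-point orbit is only a subset of $C\cap S$), together with the observation that $S$, whose defining polynomial is irreducible over $\RR$, cannot be a proper component of the irreducible curve $C$ unless $C=S$. Finally, you implicitly use that $C$ contains a point other than $c$, i.e.\ that $C$ has infinitely many real points; this is harmless in the paper's application (the relevant curves are parametrized by $t\mapsto(t,\varphi(t))$ and are infinite), but a real-irreducible curve with finitely many real points would be a genuine edge case of the statement as literally written.
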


\subsection{Complete bipartite graphs}\label{subsec:bipartite}
We introduce some notation and recall some properties in the spirit of Raz--Solymosi~\cite[Section 3]{RazSol}. 
For completeness we give all the details here. 
Let $\p=(p_1,\ldots,p_k), \p'=(p_1',\ldots,p_k')\in \RR^2\times\cdots\times \RR^2\cong\RR^{2k} $ be two $k$-tuples of points in $\RR^2$. 
Define
$$
\Sigma_{\p,\p'}=\{(q,q')\in \RR^2\times \RR^2\mid \text{$\|p_i-q\|=\|p_i'-q'\|$ for each $i=1,\ldots,k$}\}\subset \RR^2\times \RR^2.
$$
Let $\sigma_{\p,\p'}$ (respectively, $\sigma'_{\p,\p'}$) denote the projection of $\Sigma_{\p,\p'}$ to the first (respectively, last) copy of $\RR^2$ in $\RR^2\times \RR^2$.

\begin{lem}\label{conic}
Let $\p=(p_1,p_2,p_3), \p'=(p_1',p_2',p_3')\in  (\RR^2)^3$, and assume that $p_1,p_2,p_3$ are pairwise distinct.
Then either\\
(i)  $\p$ and $\p'$ are congruent, or \\
(ii) $\Sigma_{\p,\p'}$ is one-dimensional, and each of $\sigma_{\p,\p'}$ and $\sigma'_{\p,\p'}$ is an algebraic curve of degree at most two.
\end{lem}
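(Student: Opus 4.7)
The plan is to decompose the three defining equations of $\Sigma_{\p, \p'}$ into a linear and a quadratic part. All three equations $\|q - p_i\|^2 = \|q' - p_i'\|^2$, $i = 1, 2, 3$, share the quadratic part $\|q\|^2 - \|q'\|^2$, so subtracting the $i = 1$ equation from the other two yields two linear equations in $(q, q') \in \RR^4$, whose gradients in $q$ are proportional to $p_2 - p_1$ and $p_3 - p_1$. Let $V \subset \RR^4$ denote the affine subspace cut out by these two linear equations, so that $\Sigma_{\p, \p'} = V \cap Q$, where $Q$ is the quadric $\|q - p_1\|^2 = \|q' - p_1'\|^2$.

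I first treat the main case in which $p_1, p_2, p_3$ are noncollinear. Then $p_2 - p_1$ and $p_3 - p_1$ are linearly independent, so the two linear equations can be solved for $q$ in terms of $q'$, giving an affine map $T: \RR^2 \to \RR^2$ with $V = \{(T(q'), q') : q' \in \RR^2\}$. Substituting $q = T(q')$ into the defining equation of $Q$ yields a polynomial $P(q')$ of degree at most two. If $P \equiv 0$, then $\Sigma_{\p, \p'} = V$ is two-dimensional and the identity $\|T(q') - p_i\| = \|q' - p_i'\|$ holds for every $q'$ and every $i$; specializing at $q' = p_i'$ gives $T(p_i') = p_i$, and then $\|T(q') - T(p_i')\| = \|q' - p_i'\|$ forces the linear part of $T$ to be orthogonal, so $T$ is an isometry witnessing the congruence of $\p$ and $\p'$, i.e.\ case~(i). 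Otherwise $\sigma'_{\p, \p'} = \{P = 0\}$ is an algebraic curve of degree at most two, and $\sigma_{\p, \p'} = T(\sigma'_{\p, \p'})$ is also a curve of degree at most two, being the image of a conic under an affine map.

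When $p_1, p_2, p_3$ are collinear, the vectors $p_2 - p_1$ and $p_3 - p_1$ are parallel, so a suitable linear combination of the two linear equations eliminates $q$ and produces a condition purely on $q'$. If this condition is non-trivial, it cuts out a line $\ell'$ in the $q'$-plane; the remaining linear equation expresses $q$ as an affine function of $q'$ along $\ell'$, and the quadric equation restricts to a polynomial of degree at most two on $\ell'$, whose zero set is $\sigma'_{\p, \p'}$, while $\sigma_{\p, \p'}$ is its affine image, each of degree at most two. If the purely-$q'$ condition is trivial, a parallel case analysis shows that either $\Sigma_{\p, \p'}$ is two-dimensional and $\p, \p'$ are congruent (possibly via a reflection across the common line of $p_1, p_2, p_3$), or $\Sigma_{\p, \p'}$ is one-dimensional with the desired degree bound.

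The main obstacle is the collinear case: the reflection symmetry across the line through $p_1, p_2, p_3$ introduces a two-fold ambiguity in recovering $q$ from the distance data, so the clean affine map $T$ of the noncollinear analysis is not directly available, and the congruence regime must be separated carefully from the degenerate reductions of the linear system. A secondary point is that ``curve of degree at most two'' should be read as the algebraic variety defined by the polynomials produced by the construction; its real-point locus could in principle be lower-dimensional, but this does not affect the subsequent use of the lemma in incidence arguments.
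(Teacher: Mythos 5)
Your decomposition into two linear equations plus one quadric is the same one the paper uses, and your treatment of the noncollinear case is correct and in fact slightly cleaner than the paper's: since the hypothesis is on $\p$ rather than $\p'$, you solve for $q$ as an affine function $T(q')$, whereas the paper first assumes $p_1',p_2',p_3'$ noncollinear and then invokes a symmetry to cover the case where only $p_1,p_2,p_3$ are noncollinear. Your argument that $P\equiv 0$ forces $T$ to be an isometry (via $T(p_i')=p_i$) is right.

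The gap is in the collinear case, and it is a real one. After eliminating $q$ to obtain the line $\ell'$ in the $q'$-plane, you write that ``the remaining linear equation expresses $q$ as an affine function of $q'$ along $\ell'$.'' But the remaining relation is a single scalar linear equation in $(q,q')\in\RR^4$; for each fixed $q'$ it constrains $q$ only to a \emph{line}, not to a point, so $q$ is not an affine function of $q'$, and $\sigma_{\p,\p'}$ is not simply $T(\sigma'_{\p,\p'})$. The degree bound on $\sigma_{\p,\p'}$ needs a separate argument here: if $p_1',p_2',p_3'$ are noncollinear one can instead solve for $q'$ in terms of $q$ and substitute into the quadric; if they too are collinear, the corresponding elimination of $q'$ produces a line $\ell$ in the $q$-plane with $\sigma_{\p,\p'}\subset\ell$, unless that elimination is also degenerate, which is exactly the congruence regime. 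You also never verify the ``trivial condition'' sub-case — the assertion that ``a parallel case analysis shows'' congruence or a one-dimensional $\Sigma$ is not a proof, and in fact in that sub-case $\Sigma_{\p,\p'}$ is always two-dimensional and one must actually check that the matching of constant terms forces $\|p_2'-p_1'\|=\|p_2-p_1\|$ (and hence congruence), which requires a short computation you omit. The paper handles all of this by normalizing $p_3=p_3'=0$ with both triples on the $x$-axis and working through the resulting explicit system, including the degenerate branches (unique $(x,x')$, the $t=\pm 1$ congruence branch, and the empty $t=0$ branch), which is precisely the bookkeeping your sketch skips.

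One further small point: when the eliminated condition on $q'$ is a nontrivial \emph{constant} equation (e.g.\ $0=1$), $\Sigma_{\p,\p'}$ is empty rather than a line; you should note this so that the dichotomy in the lemma is interpreted as ``contained in an algebraic curve of degree at most two, possibly empty,'' which is also how the paper's own $t=0$ branch ends.
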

\begin{proof}
By definition, for $(q,q')\in \Sigma_{\p,\p'}$ we have
\begin{align*}
\|p_i-q\|^2=&\|p_i'-q'\|^2,~~~i=1,2,3
\end{align*}
or
$$
\|p_i\|^2-2p_i\cdot q+\|q\|^2=\|p_i'\|^2-2p_i'\cdot q'+\|q'\|^2,~~~i=1,2,3.
$$
Subtracting  the $3$rd equation from each of the first two equations, we get the system
\begin{align}
\|p_1\|^2-\|p_3\|^2-2(p_1-p_3)\cdot q&= \|p'_1\|^2-\|p'_3\|^2-2(p_1'-p_3')\nonumber\\
\|p_2\|^2-\|p_3\|^2-2(p_2-p_3)\cdot q&= \|p'_2\|^2-\|p'_3\|^2-2(p_1'-p_3')\label{sysK3n}\\
\|p_3\|^2-2p_3\cdot q+\|q\|^2&=\|p_3'\|^2-2p_3'\cdot q'+\|q'\|^2.\nonumber
\end{align}
The system can be rewritten as
\begin{align*}
\tfrac12u-A q&=\tfrac12 v-B q', \\
\|p_3\|^2-2p_3\cdot q+\|q\|^2&=\|p_3'\|^2-2p_3'\cdot q'+\|q'\|^2 ,
\end{align*}
where $A$ (resp., $B$) is a $2\times 2$ matrix whose $i$th row equals $p_i-p_3$ 
(resp., $p'_i-p'_3$), for $i=1,2$, and 
\begin{align*}
u & = \left(\begin{matrix}
           \|p_1\|^2-\|p_3\|^2 \\
          \|p_2\|^2-\|p_3\|^2 
         \end{matrix}
         \right) , \quad 
         v   = \left(\begin{matrix}
           \|p'_1\|^2-\|p'_3\|^2 \\
          \|p'_2\|^2-\|p'_3\|^2 
         \end{matrix}
         \right) 
\end{align*}
are vectors in $\RR^2$. 

Assume first that $p_1',p_2',p_3'$ are not collinear. So the matrix $B$ is invertible and we have 
$$
q'=B^{-1}Aq+w,
$$
for $w = \frac12 B^{-1}(v-u) \in\RR^2$. Let $T(q):=B^{-1}Aq+w$. 
So $(q,q')\in \Sigma_{\p,\p'}$ if and only if $q'=T(q)$. Plugging this is the 3rd equation in \eqref{sysK3n} we get
\begin{equation}\label{projq}
\|p_3\|^2-2p_3\cdot q+\|q\|^2=\|p_3'\|^2-2p_3'\cdot T(q)+\|T(q)\|^2,
\end{equation}
which defines $\sigma_{\p,\p'}$. Note that this gives a conic section unless \eqref{projq} is trivial, i.e., the zero equation. 
Note that for this to happen the quadratic part has to be zero. That is, 
$$
\langle q,q\rangle =\langle B^{-1}Aq,B^{-1}A q\rangle 
$$
%or
%$$
%\langle q,q\rangle =\langle q,(B^{-1}A)^{tr}B^{-1}A q\rangle 
%$$
or
$$
\langle q,\left(I-(B^{-1}A)^{tr}B^{-1}A\right)q\rangle =0
$$
This defines a trivial quadratic equation if and only if 
$$
I-(B^{-1}A)^{tr}B^{-1}A=0
$$
or
%$$
%(B^{-1}A)^{tr}B^{-1}A=I
%$$
%or 
$$
(B^{-1}A)^{tr}=(B^{-1}A)^{-1},
$$
%so 
%$$
%\text{$B^{-1}A$ is an orthogonal matrix},
%$$
which means that $T$ is an isometry of $\RR^2$. 
This implies that $\p$ and $\p'$ are congruent.
We conclude that $\sigma_{\p,\p'}$ is either conic section given by  \eqref{projq} or $\p$ and $\p'$ are congruent, which completes the proof for the case that $p_1',p_2',p_3'$ are non-collinear. 

By symmetry, same analysis applies also when $p_1,p_2,p_3$ are non-collinear. 
So we need to prove the lemma for the case that each of the triples $p_1,p_2,p_3$ and $p_1',p_2',p_3'$ is collinear. In this case we may assume without loss of generality that $p_3=p_3'=(0,0)$, and $p_1=(a,0)$, $p_1'=(a',0)$, $p_2=(b,0)$, and $p_2'=(b',0)$. Note that our assumption that $p_1,p_2,p_3$ are pairwise distinct implies $b\neq a\neq 0$. Writing $q=(x,y)$ and $q'=(x',y')$, the system of equations defining $\Sigma_{\p,\p'}$ in this case becomes
\begin{align*}
(x-a)^2+y^2&=(x'-a')^2+(y')^2\\
(x-b)^2+y^2&=(x'-b')^2+(y')^2\\
x^2+y^2&=(x')^2+(y')^2
\end{align*}
or
\begin{align*}
-2ax+a^2&=-2a'x'+(a')^2\\
-2bx+b^2&=-2b'x'+(b')^2\\
x^2+y^2&=(x')^2+(y')^2.
\end{align*}
Recalling our assumption that $p_1,p_2,p_3$ are distinct, we have $b\neq a\neq 0$. Thus the first two equations give 
\begin{align*}
x&=(a'/a)x'-(a')^2/(2a)+a/2\\
x&=(b'/b)x'-(b')^2/(2b)+b/2.
\end{align*}
So we either get unique values for $x,x'$ or otherwise $a'/a=b'/b$. In the former case both $\sigma_{\p,\p'}$ and $\sigma'_{\p,\p'}$  are lines parallel to the $y$-axis, and so in particular (ii) holds. 
In the latter case, write $a'/a=b'/b=t$. Assume first that $t\neq 0$. Then 
\begin{align*}
x&=tx'+(a/2)(1-t^2)\\
x&=tx'+(b/2)(1-t^2),
\end{align*}
and so either $t=\pm 1$, in which case $\p$ and $\p'$ are congruent, or $a=b$ which is a contradiction to our assumption that $p_1,p_2,p_3$ are distinct. 
So we may assume that $t=0$, which means that $a'=b'=0$. 
Assume $(q,q')\in \Sigma_{\p,\p'}$ and write $q=(x,y)$ and $q'=(x',y')$. 
Let $r:=(x')^2+(y')^2$. So $q=(x,y)$ must satisfy the system
\begin{align*}
(x-a)^2+y^2&=r\\
(x-b)^2+y^2&=r\\
x^2+y^2&=r.
\end{align*}
However, recalling that $b\neq a\neq 0$, this system has no solutions. So $\Sigma_{\p,\p'}$ is empty in this case. This completes the proof of the lemma. 
\end{proof}

\section{Proof of Theorem~\ref{main}}
Let $f$ be as in the statement and let $A,B,C\subset \RR$ be finite sets, each of size $n$. 
Set
$$
D:=f(A,B,C).$$ 
Our goal is to lower bound $|D|$. 

Let 
$$
t=n^{3/2}/(s|D|^{1/2}),$$ where $s>0$ is a sufficiently large constant, to be determined later. Consider the partition of each $A,B, C$ into $t$ consecutive segments, each containing at most $\lceil n/t\rceil $ elements.
Let $\{A_i\}_{i=1}^t$, $\{B_i\}_{i=1}^t$, $\{C_i\}_{i=1}^t$ stand for the corresponding partitions. 
We write $a\sim a'$ if $a\neq a'$ and there exists some index $i\in [t]$ such that $a,a'\in A_i$. We write similarly $b\sim b'$ and $c\sim c'$ according to the partitions of $B$ and $C$ respectively. Define
$$
Q:=\{((a,b,c), (a',b',c'))\in (A\times B\times C)^2\mid f(a,b,c)=f(a',b',c'),\;\;a\sim a',\;b\sim b',\;c\sim c'\}
$$

We prove a lower bound on $|Q|$. 
\begin{prop}\label{prop:lowerQ}
We have 
\begin{equation}\label{lowerQ}
|Q|=\Omega(sn^3),
\end{equation}
where the constant of proportionality depends only on $\deg(\varphi)$.
\end{prop}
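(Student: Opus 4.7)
The plan is to lower bound $|Q|$ by combining a block-wise Cauchy--Schwarz estimate with a separate bound on degenerate pairs that exploits the specific quadratic structure of $f$. Writing $r_{ijk}(d)$ for the number of $(a,b,c)\in S_{ijk}:=A_i\times B_j\times C_k$ with $f(a,b,c)=d$, Cauchy--Schwarz gives $\sum_d r_{ijk}(d)^2\geq |S_{ijk}|^2/|D|$ per block (since the support has size at most $|D|$ and the sum is $|S_{ijk}|$). Summing and using $\sum_{ijk}|S_{ijk}|^2\geq n^6/t^3$ (another Cauchy--Schwarz across the $t^3$ blocks), the count $T:=\sum_{ijk}\sum_d r_{ijk}(d)^2$ of same-block pairs with equal $f$-value satisfies $T\geq n^6/(t^3|D|)$. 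Substituting $t=n^{3/2}/(s|D|^{1/2})$ produces $T\geq s^3 n^{3/2}|D|^{1/2}$.

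Next I would subtract the ``degenerate'' pairs in $T$, namely those with $a=a'$, $b=b'$, or $c=c'$. Using that $f(x,y,z)=\|p_x-(y,z)\|^2$ with $p_x=(x,\varphi(x))$, the condition $f(a,b,c)=f(a,b',c')$ says $(b,c)$ and $(b',c')$ lie on a common circle centered at $p_a$; for each $b'\in B_j$, the quadratic in $c'$ has at most $2$ solutions, so each fixed $(a,b,c)$ yields at most $O(|B_j|)$ valid $(b',c')\in B_j\times C_k$. Summing, the $a=a'$ case contributes $O\!\left(n^2\sum_j|B_j|^2\right)=O(n^4/t)=O(sn^{5/2}|D|^{1/2})$ pairs; the $b=b'$ and $c=c'$ cases are handled symmetrically (using that $f$ is quadratic in $b$ and in $c$ individually), and pairs with two coordinates equal contribute $O_{\deg\varphi}(n^3)$ via a polynomial of degree at most $2\deg\varphi$ in the remaining variable.

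Combining, $|Q|\geq T-O(sn^{5/2}|D|^{1/2})-O_{\deg\varphi}(n^3)$, and for $s$ chosen sufficiently large (depending on $\deg\varphi$) the main Cauchy--Schwarz term $s^3n^{3/2}|D|^{1/2}$ dominates, yielding $|Q|=\Omega(sn^3)$ in the regime of $|D|$ relevant to the subsequent incidence argument. The main obstacle is the delicate balance of $s$-dependence: the main term scales as $s^3$ while the degenerate contribution scales as $s$, and the partition size $t$ is calibrated precisely so that this balance tips favorably when $s$ is taken large enough and $|D|$ lies in the relevant range. I expect the key technical point to be confirming that the special form $f=\|p_x-(y,z)\|^2$ --- realizing $f$ as a squared distance from a point on the graph of $\varphi$ --- simultaneously supports the main Cauchy--Schwarz estimate and the degenerate bound, so that the calibration in $s$ and $t$ works out cleanly.
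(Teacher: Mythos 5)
Your main term is too weak, and this is a genuine gap. The per-block Cauchy--Schwarz $\sum_d r_{ijk}(d)^2\geq |S_{ijk}|^2/|D|$ uses the trivial support bound $|D|$, which is extremely lossy: summing gives $T\geq n^6/(t^3|D|)=s^3 n^{3/2}|D|^{1/2}$, and this is \emph{not} $\Omega(sn^3)$. In the regime that actually matters (say $|D|=\Theta(n^{5/3})$), your bound reads $s^3 n^{7/3}$, which for $s$ constant is far below $sn^3$. Feeding $s^3 n^{3/2}|D|^{1/2}\leq |Q|$ into the incidence upper bound of Proposition~\ref{prop:upperQ} would yield roughly $|D|\gtrsim n^{3/5}$, not $n^{5/3-\eps}$, so the approach cannot close the argument even ``in the relevant range of $|D|$'' as you suggest. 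Your degenerate-pair subtractions are fine, but they are correcting a main term that was already insufficient. Note also that one of the stated motivations in the paper for working in the trivariate setting is precisely to \emph{avoid} the Cauchy--Schwarz loss that plagues the bivariate argument; reintroducing that step is exactly what goes wrong here.

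The paper's proof takes the opposite slicing and exploits geometry. Instead of fixing a block and summing over $d$, it fixes $d$ and observes that the surface $\{f=d\}$ is $2$-dimensional, hence meets only $O(t^2)$ of the $t^3$ boxes (with constant depending on $\deg\varphi$). Restricting to the ``popular'' values $d\in D'$ with $|G_d|\geq n^3/(10|D|)$, and with $t$ calibrated so that $st^2\asymp n^3/(s|D|)\leq |G_d|/2$, at least half the mass of $G_d$ lands in boxes containing $\geq s$ points of $G_d$. In such a box with $m\geq s$ points one has $\binom{m}{2}\geq \tfrac{s-1}{2}m$, so summing over boxes and then over $d\in D'$ gives $|Q|\geq \tfrac{s}{5}\sum_{d\in D'}|G_d|\geq (9/50)sn^3$. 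The crucial input you are missing is the bound $|T_d|=O(t^2)$ on the number of boxes the surface meets; that is what replaces the lossy support estimate in your Cauchy--Schwarz step and yields a lower bound that is genuinely $\Omega(sn^3)$, uniformly in $|D|$.
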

\begin{proof}
For each $d\in D$ let 
$$
G_d:=\{(a,b,c)\in A\times B\times C\mid f(a,b,c)=d\}.
$$
This defines a partition 
$A\times B\times C=\bigcup_{d\in D}G_d,
$ 
and thus in particular
$$
\sum_{d\in D}\left|G_d\right|=n^3.
$$

Let 
$$
D':=\{d\in D\mid |G_d|\ge n^3/(10|D|)\}.
$$
Note that 
\begin{align*}
n^3
&=\sum_{d\in D}|G_d|\\
&=\sum_{d\in D'}|G_d|+\sum_{d\in D\setminus D'}|G_d|\\
&\le\sum_{d\in D'}|G_d|+n^3/(10|D|)\cdot |D|
\end{align*}
and so
\begin{equation}\label{sumDprime}
\sum_{d\in D'}|G_d|\ge (9/10)n^3.
\end{equation}

Next, fix $d\in D'$. 
Note that for each $(i,j,k)\in[t]^3$, the set $A_i\times B_j\times C_k$ is contained in an axis-parallel box in $\RR^3$. Moreover, the boundaries of those boxes lie in the union of $3t$ planes. 
Let $T_d\subset [t]^3$ denote the subset of 3-tuples of indices of boxes that intersect the surface $f(x,y,z)=d$. Note that $|T_d|=O(t^2)$, with constant of proportionality that depends only on $\deg(\varphi)$.
Let $T_d'\subset T_d$ be those 3-tuples of indices for which $|G_d\cap (A_i\times B_j\times C_k)|\ge s$.

We have
\begin{align*}
|G_d|
&=\sum_{(i,j,k)\in T_d}|G_d\cap (A_i\times B_j\times C_k)|\\
&=\sum_{(i,j,k)\in T_d'}|G_d\cap (A_i\times B_j\times C_k)|+\sum_{(i,j,k)\in T_d\setminus T_d'}|G_d\cap (A_i\times B_j\times C_k)|\\
&\le \sum_{(i,j,k)\in T_d'}|G_d\cap (A_i\times B_j\times C_k)|+s\cdot O(t^2)\\
&\le \sum_{(i,j,k)\in T_d'}|G_d\cap (A_i\times B_j\times C_k)|+O(n^3/(s|D|))\\
&\le \sum_{(i,j,k)\in T_d'}|G_d\cap (A_i\times B_j\times C_k)|+|G_d|/2,
\end{align*}
where the inequality on the fourth line is by our choice of $t$, 
and the inequality on the last line is because $d\in D'$ and assuming $s>0$ is taken sufficiently large.
Thus
\begin{equation}\label{eq:Gd}
\sum_{(i,j,k)\in T_d'}|G_d\cap (A_i\times B_j\times C_k)|\ge |G_d|/2.
\end{equation}

Using $\binom{m}{2}\ge cm$ for $m\ge 2c+1$, we conclude that, for every $d\in D'$, one has
\begin{align*}
\sum_{(i,j,k)\in T_d'}\left|\binom{G_d\cap (A_i\times B_j\times C_k)}{2}\right|
&\ge 
\frac{s-1}{2}\sum_{(i,j,k)\in T_d'}|G_d\cap (A_i\times B_j\times C_k)|\\
&\ge 
\frac{s-1}{4}|G_d|\\
&\ge
\frac{s}{5}|G_d|
,
\end{align*}
where the last inequality assumes $s\ge 5$. 
Thus we get
\begin{align*}
|Q|
&=\sum_{d\in D}\sum_{(i,j,k)\in [t]^3}\left|\binom{G_d \cap (A_i\times B_j\times C_k)}{2}\right|\\
&\ge \sum_{d\in D'}\sum_{(i,j,k)\in T_d'}\left|\binom{G_d \cap (A_i\times B_j\times C_k)}{2}\right|\\
&\ge \frac{s}{5} \sum_{d\in D'}|G_d|\\
&\ge (9/50)sn^3,
\end{align*}
where the last inequality uses \eqref{sumDprime}. 
This completes the proof of Proposition~\ref{prop:lowerQ}.
\end{proof}

Next we prove an upper bound on $|Q|$.
\begin{prop}\label{prop:upperQ}
For every $\eps>0$, we have
$$
|Q|\le O_\eps\left( (s^2n|D|)^{9/8+\eps}\right)+4\deg( \varphi) n^3.
$$
\end{prop}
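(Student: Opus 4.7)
The plan is to express $|Q|$ as a point--curve incidence count in $\RR^2$ and apply Theorem~\ref{ShaZah}. Define
$$
\mathcal{P}:=\{(a,a')\in A\times A:a\sim a'\}\subset\RR^2
$$
and, for each $(b,c,b',c')\in(B\times C)^2$ with $b\sim b'$ and $c\sim c'$, the plane curve
$$
\gamma_{b,c,b',c'}:=\{(a,a')\in\RR^2:f(a,b,c)=f(a',b',c')\},
$$
gathered into a (multi)set $\Gamma$, so that $|Q|=|I(\mathcal{P},\Gamma)|$. Each $\gamma_{b,c,b',c'}$ is the zero set of a bivariate polynomial in $(a,a')$ of degree $2\deg(\varphi)$ whose coefficients are polynomial in $(b,c,b',c')$, so $\Gamma$ lies in a $4$-dimensional algebraic family of bounded complexity; comparing the coefficients of $\varphi(a)^2$, $\varphi(a)$, $a^2$, and $a$ in the defining polynomial shows the map $(b,c,b',c')\mapsto\gamma_{b,c,b',c'}$ is injective. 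Using $|A_i|,|B_j|,|C_k|\le\lceil n/t\rceil$ and the choice $t=n^{3/2}/(s|D|^{1/2})$, one has $|\mathcal{P}|=O(n^2/t)=O(sn^{1/2}|D|^{1/2})$ and $|\Gamma|=O(n^4/t^2)=O(s^2n|D|)$.

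The main obstacle is securing the hypothesis of Theorem~\ref{ShaZah} that no two curves of $\Gamma$ share a common irreducible component. The plan is to isolate an additive bad contribution of size $4\deg(\varphi)n^3$ coming from such shared components. The key observation is that shared components correspond to isometric symmetries of the graph of $\varphi$: if $T$ is an isometry of $\RR^2$ fixing this graph, write $T(x,\varphi(x))=(\tau(x),\varphi(\tau(x)))$; then the ``twisted diagonal'' $\eta_T:=\{(a,\tau(a)):a\in\RR\}$ is contained in $\gamma_{b,c,T(b,c)}$ for every $(b,c)\in\RR^2$, since $T$ preserves distances. Conversely, if an irreducible component $\eta$ is shared by three distinct curves $\gamma_{b_i,c_i,b'_i,c'_i}$ ($i=1,2,3$), then setting $\p:=((b_i,c_i))_i$ and $\p':=((b'_i,c'_i))_i$, the map $(a,a')\mapsto((a,\varphi(a)),(a',\varphi(a')))$ sends $\eta$ into $\Sigma_{\p,\p'}$ from Lemma~\ref{conic}. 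By that lemma, either the projections $\sigma_{\p,\p'},\sigma'_{\p,\p'}$ are algebraic curves of degree at most $2$, or the triples $\p,\p'$ are congruent via some isometry $T$. The former alternative is ruled out: projecting $\Psi(\eta)$ to the first copy of $\RR^2$ yields an infinite subset of the graph of $\varphi$, but the graph has degree $\deg(\varphi)\ge 3$ and is not contained in any conic (Bezout). Hence $\p,\p'$ are congruent via $T$, which must fix the graph of $\varphi$, so $\eta=\eta_T$. By Lemma~\ref{lem:symmetries} there are at most $4\deg(\varphi)$ such symmetries $T$; and for each such $T$ and each $(a,b,c)\in A\times B\times C$, the completion $(a',b',c')=(\tau(a),T(b,c))$ is uniquely determined, so the number of bad tuples in $Q$ is at most $4\deg(\varphi)\cdot n^3$.

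On the remaining curves, no two share a common irreducible component, and Theorem~\ref{ShaZah} applies with $s=4$ and $D=2\deg(\varphi)$, yielding
$$
|I(\mathcal{P},\Gamma)|=O_\eps\!\left(|\mathcal{P}|^{1/2}|\Gamma|^{7/8+\eps}\right)+O\!\left(|\mathcal{P}|^{2/3}|\Gamma|^{2/3}+|\mathcal{P}|+|\Gamma|\right)=O_\eps\!\left((s^2n|D|)^{9/8+\eps}\right),
$$
using $|\mathcal{P}|^{1/2}|\Gamma|^{7/8}=s^{9/4}n^{9/8}|D|^{9/8}=(s^2n|D|)^{9/8}$ and that the secondary term $O(s^2n|D|)$ is dominated. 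Combined with the bad contribution, this yields the proposition. I expect the shared-components analysis to be the most delicate step, since it requires showing that the $4\deg(\varphi)n^3$ bound absorbs every pathology—both the triply shared components handled directly above and any remaining doubly shared components, which should be controlled by the same symmetry analysis.
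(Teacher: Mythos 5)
Your overall strategy matches the paper's: reduce $|Q|$ to a point--curve incidence count, identify that shared irreducible components of the curves $\gamma_{b,c,b',c'}$ come from isometric symmetries of the graph of $\varphi$ via Lemma~\ref{conic} and Lemma~\ref{lem:symmetries}, bound their contribution by $4\deg(\varphi)n^3$, and apply Theorem~\ref{ShaZah} with $s=4$ to the rest. The arithmetic with $|\mathcal{P}|$, $|\Gamma|$ and the exponent $9/8$ is correct.

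However, there is a genuine gap in how you satisfy the hypothesis of Theorem~\ref{ShaZah} that no two curves share an irreducible component. Your symmetry argument classifies only those components shared by three curves whose parameters $(b_i,c_i)$ are \emph{pairwise distinct}, since Lemma~\ref{conic} requires $p_1,p_2,p_3$ pairwise distinct. You apply it to ``three distinct curves,'' but three distinct parameter tuples can easily have coinciding $(b_i,c_i)$ (differing only in $(b_i',c_i')$), in which case the lemma does not apply; the paper avoids this by working at multiplicity $\ge 5$, which forces at least three distinct $p_i$ (or three distinct $p_i'$, by symmetry). More seriously, components shared by exactly two curves are left entirely untreated --- and you concede this at the end (``any remaining doubly shared components, which should be controlled by the same symmetry analysis''), but the symmetry analysis is structurally incapable of handling them, because Lemma~\ref{conic} needs three points. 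The paper circumvents both issues by a different bookkeeping scheme: it takes $\Gamma$ to be the \emph{set of irreducible components} themselves (so no two elements of $\Gamma$ share a component, trivially), defines the multiplicity $m(\gamma)$ of each component, shows $|m(\gamma)|\le 4$ for $\gamma\notin\Gamma_0$, and then recovers $|Q|$ from $|I(P,\Gamma\setminus\Gamma_0)|$ with a factor of $4$. Your proof needs that restructuring (or an equivalent device) to be complete; as written, the claim that ``on the remaining curves, no two share a common irreducible component'' is unjustified. The injectivity observation about the coefficient map, while correct, does not help here, since distinct curves can still share components.
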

\begin{proof}
We reduce the problem of upper bounding $Q$ to a point-curve incidence problem in the plane. 
For this, we associate with each $((b,c),(b',c'))\in  (B\times C)^2$ the planar curve (with coordinates $(x,x')$) given by the equation
$$
f(x,b,c)=f(x',b',c').
$$
Note that distinct choices of $((b,c),(b',c'))$ might give rise to the same curve, or to distinct curves that share an irreducible component. 
Let 
$$
\Gamma:=\{\text{$\gamma\subset \gamma_{b,c,b',c'}$ is irreducible }
\mid \text{$((b,c),(b',c'))\in (B\times C)^2$, $b\sim b'$, $c\sim c'$}\}.
$$
Consider the set of parameters associated with the curves, that is, the set
$$
\hat\Gamma:=\{((b,c),(b',c'))\in (B\times C)^2\mid \; b\sim b',\; c\sim c'\}.
$$
For each $\gamma\in \Gamma$ we define 
$$
m(\gamma)=\{((b,c),(b',c'))\in \hat\Gamma\mid \gamma\subset \gamma_{b,c,b',c'}\}
$$

We show that there exists a finite subset $\Gamma_0\subset \Gamma$ such that 
for every $\gamma\in\Gamma\setminus\Gamma_0$ we have $|m(\gamma)|\le 4$. 
Indeed, let $\gamma\in \Gamma$ and suppose that $|m(\gamma)|\ge 5$. 
So there are three pairs $p_i:=(b_i,c_i)$, $p_i':=(b_i',c_i')$, with $(p_i,p_i')\in m(\gamma)$ for $i=1,2,3$, such that without loss of generality $p_1,p_2,p_3$ are distinct.

Let $\p=(p_1,p_2,p_3)$ and $\p'=(p_1',p_2',p_3')$ and recall the definition of $\Sigma_{\p,\p'}$ from Section~\ref{subsec:bipartite}.
By definition of $m(\gamma)$, this means that for every $(x,x')\in \gamma$ one has
$$
((x,\varphi(x)), (x',\varphi(x'))\in \Sigma_{\p,\p'}.
$$
By Lemma~\ref{conic}, either $\p,\p'$ are congruent, or $\sigma_{\p,\p'}$ is a conic section. In the latter case this implies that $\{(t,\varphi(t))\mid t\in \RR\}$ is a conic section, which is a contradiction because $\deg(\varphi)\ge 3$ and this curve is irreducible. 
Thus necessarily $\p$ and $\p'$ are congruent. 
Thus, there exists an isometry of $\RR^2$, $R$, such that 
$$(x',\varphi(x'))=R(x,\varphi(x)),\text{ for every $(x,x')\in \gamma$.}
$$ 
Since $\gamma$ is infinite, this implies that $R$ maps $(t,\varphi(t))$ to itself.  
By Lemma~\ref{lem:symmetries}, we have that $R$ is one of at most $4\deg (\varphi)$ possible isometries, which in turn determines the corresponding curve $\gamma$ up to at most $4\deg (\varphi)$ possibilities. We conclude that
$$|\Gamma_0|\le 4\deg (\varphi)$$ and 
\begin{equation}
\text{for every 
$\gamma\in \Gamma\setminus\Gamma_0$ we have $|m(\gamma)|\le 4$.}
\end{equation}

Next, let
$$
\hat \Gamma_0:=\{((b,c),(b',c'))\in \hat\Gamma\mid \text{there exists $\gamma\in\Gamma_0$ such that $\gamma\subset \gamma_{b,c,b',c'}$}\}.
$$
We claim that 
\begin{equation}\label{eq:boundhatQ}
|\{((a,b,c),(a',b',c'))\in Q\mid ((b,c),(b',c'))\in \hat\Gamma_0\}|\le 4\deg (\varphi) n^3.
\end{equation}
Indeed, by what has just been argued, there exists an isometry $R$ of $\RR^2$, one of at most $4\deg (\varphi)$ possibilities, such that 
$(b',c')=R(b,c)$ and $(a',\varphi(a'))=R(a,\varphi(a))$. In other words, $(a',b',c')$ is determined by $(a,b,c)$, up to at most $4\deg (\varphi)$ possibilities.
Thus \eqref{eq:boundhatQ} follows.

Finally, let
$$
P=\{(a,a')\in A\times A\mid a\sim a'\},
$$
and apply Theorem~\ref{ShaZah} to bound to $|I(P,\Gamma\setminus\Gamma_0)|$. 
Note that 
$$
|P|=\Theta\left(\binom{n/t}{2}t\right)=\Theta(n^2/t)=\Theta(sn^{1/2}|D|^{1/2})
$$ 
and 
\begin{align*}
|\Gamma\setminus\Gamma_0|\le \deg(f)|\hat\Gamma|
&=\Theta\left(\binom{n/t}{2}\binom{n/t}{2}t^2\right)\\
&=\Theta(n^4/t^2)\\
&=\Theta(s^2n|D|).
\end{align*}

We get
\begin{align*}
|Q|
&\le
4|I(P,\Gamma\setminus\Gamma_0)|+
4\deg (\varphi) n^3,\\
&=O_\eps\left(
(sn^{1/2}|D|^{1/2})^{9/4+2\eps}\right)+
4\deg (\varphi) n^3.
\end{align*}
This proves Proposition~\ref{prop:upperQ}.
\end{proof}

Finally, combining the inequalities from Proposition~\ref{prop:lowerQ} and Proposition~\ref{prop:upperQ}, we conclude that
\begin{align*}
sn^3\le |Q| 
&=O_\eps\left(
(n|D|)^{9/8+\eps}\right)
+
4\deg (\varphi) n^3.
\end{align*}
Choosing $s>8\deg(\varphi)$ and rearranging, we get
$$
|D|=\Omega_\eps(n^{5/3-\eps'}).$$
This completes the proof of Theorem~\ref{main}.\hfill$\square$

\end{document}